\numberwithin{theorem}{section}
\newcommand{\TheTitle}{Modeling variational inpainting methods with splines} 
\newcommand{\TheAuthors}{F. Bo\ss{}mann, T. Sauer, N. Sissouno}
\headers{\TheTitle}{\TheAuthors}
\title{{\TheTitle}}
\author{
  Florian Bo\ss{}mann\thanks{Chair of Digital Image Processing, University of Passau, Innstr. 43, 94032 Passau, Germany.}
  \and
  Tomas Sauer\footnotemark[1]
  \and
  Nada Sissouno\footnotemark[1] \thanks{Department of Mathematics, Technical University of Munich, Boltzmannstr. 3, 85748 Garching, Germany.}
}
\newcommand{\R}{\ensuremath\mathbb{R}}
 \newcommand{\Z}{\ensuremath\mathbb{Z}}
 \newcommand{\N}{\ensuremath\mathbb{N}}
\newcommand{\argmin}{\operatorname{argmin}}
\newcommand{\ii}{\ensuremath{\mathrm{I}}}
\newcommand{\ik}{\ensuremath{\mathrm{K}}}
\newcommand{\iJ}{\ensuremath{\mathrm{J}}}
\newcommand{\mB}{\ensuremath{\mathbf{B}}}
\newcommand{\mf}{\ensuremath{\mathbf{f}}}
\newcommand{\n}{\ensuremath\pmb{n}}
\newcommand{\m}{\ensuremath\pmb{m}}
\newcommand{\fk}{\ensuremath\pmb{k}}
\newcommand{\lh}{\ensuremath\pmb{h}}
\newcommand{\fx}{\ensuremath\pmb{x}}
\newcommand{\fxi}{\ensuremath\pmb{\xi}}
\newcommand{\ftheta}{\ensuremath\pmb{\theta}}
\newcommand{\TPSs}{\ensuremath\mathscr{S}^{\n}}
\newcommand{\fa}{\ensuremath\pmb{\alpha}}
\newcommand{\fb}{\ensuremath\pmb{\beta}}
\newcommand{\fg}{\ensuremath\pmb{\gamma}}
\newtheorem{Pro}[theorem]{Proposition}
\newtheorem{OP}{Optimization Problem}
\newtheorem{SIM}{Spline Inpainting Model}
\newtheorem{dSIM}{Discrete Spline Inpainting Model}
\newtheorem{remark}[theorem]{Remark}
\begin{document}

\maketitle
\begin{abstract}
Mathematical methods of image inpainting involve the discretization of given continuous models. We present a method that avoids the standard pointwise discretization by modeling known variational approaches, in particular total variation (TV), using a finite dimensional spline space. Besides the analysis of the resulting model, we present a numerical implementation based on the alternating method of multipliers. We compare the results numerically with classical TV inpainting and give examples of applications.
\end{abstract}

\begin{keywords}
  inpainting, variational, spline, discretization, interpolation
\end{keywords}

\begin{AMS}
  41A15, 41A63, 49M25, 65D05, 65D07, 65K10
\end{AMS}

\section{Introduction}\label{sec:Intro}
In this paper we investigate the modeling of a continuous inpainting
method for digital images. Overviews of
mathematical models/methods in image processing including inpainting
are given in \cite{CS05,AK06}.
Due to the well-known work of Rudin and Osher~\cite{RO94}, and
Rudin, Osher, and Fatemi~\cite{ROF92}, functions of bounded
variation are often considered in the continuous model. For a numerical
solution and its implementation, the models need to be discretized. For
digital images, given by a set of uniformly distributed pixels,
those functions and their derivatives are typically discretized using
difference schemes for the pixel values. In contrast to that,
we use a finite
dimensional function space here, specifically, the space of tensor product
spline functions, see e.g., \cite{dB01,Schu}, and thus avoid
a pointwise discretization.

In general, \emph{inpainting} or \emph{filling in} pursues the goal to
restore parts 
$\Omega$ of an image $\Omega'$, $\Omega\subset\Omega'\subset\R^d$,
where the information has been removed, damaged, or is missing by
using the information from the remaining and well-maintained part.
There
exists a variety of models for this purpose, see
e.g., \cite{BSCB00,BVSO03,EL99} and also \cite{CS05,AK06}.
We will incorporate a variational
approach by, roughly speaking, minimizing some functional
over an extended inpainting area $U(\Omega)\supset\Omega$
subject to side conditions which ensure the reproduction of the
well-maintained part of the image.
Typically, the functional involves the image
function $u$ and some of its derivatives. The side condition, on the
other hand, is chosen over some neighborhood
$B\subseteq\Omega'\setminus\Omega$ of the inpainting area. 
We will model the class of functions for the variational method with
tensor product B-splines together with a focus on \emph{TV
  inpainting}, \cite{CS02}, that is
\begin{equation*}
\int_{U(\Omega)}|\nabla u(\fx)|\, d\fx,
\end{equation*}  
the effective, often used ROF-functional,
see \cite{ROF92}, which results in level curves of minimal length.

We begin by giving some basic properties and notation for the tensor
product splines used in this paper in Section~\ref{sec:Pre_Not}. 
Afterwards, in Section~\ref{sec:Model} we show how to model the
inpainting problem with those splines and analyze its
properties. Numerical results are given in Section~\ref{sec:Ex}. We present
an implementation using the alternating method of multipliers
(ADMM) \cite{CP11}, compare the results of our method to standard TV
inpainting and show some examples of applications.

\section{Preliminaries and notation}\label{sec:Pre_Not}
In the sequel, we will use the following notations
and basic facts about B-splines and their derivatives. Even if images
are usually only considered in 2D, we will present the theory in an
arbitrary number of variables; applications in higher dimensions
would, for example include inpainting in voxel data provided by
computerized tomography.
The tensor product
B-splines for inpainting which we will use in Section~\ref{sec:Model}
are defined over a rectangle
$R=\otimes_{j=1}^d[a_j,b_j]$, $d\in\N$. They are based on 
a {\em tensor product knot grid} which is given as
\begin{equation}\label{eq:knots}
  T
  :=
  \otimes_{j=1}^{d}\{\tau_{j,1},\dots,\tau_{j,m_j+2n_j}\}
\end{equation}
with $\tau_{j,i}<\tau_{j,i+n_j}$ for $n_j\le i \le m_j+n_j$. At the
boundary, we request multiple knots
\begin{equation}\label{eq:boundary_knots}
\tau_{j,1}=\dots=\tau_{j,n_j}=a_j
\quad\textrm{ and }\quad
\tau_{j,m_j+n_j+1}=\dots=\tau_{j,m_j+2n_j}=b_j,
\end{equation}
for $n_j,\, m_j\in\N$, $1\le j\le d$.
We set $\n:=(n_1,\dots, n_d)$ and $\m:=(m_1,\dots,m_d)$ for the
dimensions and degrees in the individual coordinates, respectively.
The {\em grid width}
$\lh:=(h_1,\dots,h_d)$, defined as
$h_j:=\max_k|\tau_{j,k}-\tau_{j,k+1}|$, is known to influence the
approximation properties of the spline space.
A {\em grid cell} is denoted by
$Z_{\fk}:=\otimes_{j=1}^d[\tau_{j,k_j},\tau_{j,k_j+1}]$
for
$\fk\in \ik:=\otimes_{j=1}^d\{1,\dots,m_j+2n_j-1\}$.

The {\em tensor product B-splines} of order $\n$ with respect to the
grid $T$ are denoted by
\[
  B_{\fa}^{\n}(\fx)=\prod_{j=1}^{d}B_{\alpha_j}^{n_j}(x_j), \qquad
  \fa \le \m + \n,
\]
i.e.,
$\fa\in\ii_{\m,\n}:=\otimes_{j=1}^d\{1,\dots,m_j+n_j\}$. They have the
{\em support}
\begin{equation}\label{eq:positivB}
S_{\fa}^{\n}:=\otimes_{j=1}^d S_{\alpha_j}^{n_j}, \qquad
S_{\alpha_j}^{n_j}:=[\tau_{j,\alpha_j},\tau_{j,\alpha_j+n_j}],
\end{equation}
respectively.
The {\em spline space} $\TPSs(T,\Omega)$ of order
$\n$ restricted to a domain $\Omega\subset\R^d$ is spanned by
all B-splines $B_{\fa}^{\n}$ with $S_{\fa}^{\n}\cap \Omega\neq
\emptyset$. The index set of all B-splines relevant for $\Omega$ is
defined as
$\ii_\Omega
:=
\{
\fa\in\ii_{\m,\n}|S_{\fa}^{\n}\cap\Omega\neq\emptyset
\}$
and their number as $\#(\ii_\Omega)$. If $\Omega=R$, then
$\ii_R=\ii_{\m,\n}$ and $\#(\ii_R) = \prod_{j=1}^d m_j+n_j$.\\
A tensor product spline of order $\n$ with respect to $T$ over a domain
$\Omega$ is given by
\[
s(\fx)
=
\sum_{\fa\in \ii_\Omega}f_{\fa}B_{\fa}^{\n}(\fx),\qquad \fx\in\Omega,
\]
with coefficients $f_{\fa}\in\R$. Let
$\mf=\big(f_{\fa}\big)_{\fa\in \ii_\Omega}$ and 
$\mB(\fx)^{T}:=\big(B_{\fa}^{\n}(\fx)\big)_{\fa\in \ii_\Omega}$ denote
the vector of the coefficients and B-splines, respectively, then
the spline $s$ is given in matrix notation by $s(\fx)=\mB(\fx)\mf$.

With respect to inpainting methods an important property of the
spline functions is that their derivatives can be expressed in terms
of the coefficients of the spline function itself, that is 
\begin{equation}\label{eq:derivativeS}
\partial_js(\fx) := 
 \frac{\partial}{\partial x_j}s(\fx)
=\sum_{\fa\in \ii_\Omega}f_{\fa}\,\partial_jB^{\n}_{\fa}(\fx)
\end{equation}
where 
\begin{equation*}\label{eq:derivativeB}
  \partial_jB^{\n}_{\fa}(\fx)
  :=
  \frac{\partial}{\partial x_j} B^{\n}_{\fa}(\fx)
  =
  (n_j-1)
  \left(
  \frac{B^{\n-\varepsilon_j}_{\fa}(\fx)}{|S^{n_j-1}_{\alpha_j}|}
  -
  \frac{B^{\n-\varepsilon_j}_{\fa+\varepsilon_j}(\fx)}
  	{|S^{n_j-1}_{\alpha_j+1}|}
  \right)
\end{equation*}
denotes the derivative of the B-splines. Here, $\varepsilon_j$ is the $j$-th unit vector.

The $\ell^{p}$-norm of vectors will be denoted by $\|\cdot\|_p$ and
the $L^{p}$-norm of functions by $\|\cdot\|_{p,\Omega}$.
\section{Modeling of inpainting problem}\label{sec:Model}
Assume we want to reproduce a rectangular picture $\Omega'\subset\R^d$
by a tensor product spline. This can easily be done by choosing a
tensor product grid with multiple knots on the boundary
$\partial\Omega'$ such that $\Omega'$ is the domain of
definition. This method guarantees that the B-spline basis is stable.
\begin{theorem}
For any domain $R=\otimes_{j=1}^d[a_j,b_j]$, $d\in\N$ and any spline order
$\n=(n_1,\dots, n_d)\in\N^{d}$, the B-spline basis
$\{B_{\fa}^{\n}(\fx)\}_{\fa\in\ii_{\m,\n}} $ with respect to knots
defined as in \eqref{eq:knots} and \eqref{eq:boundary_knots} is stable,
that is, there exist constants $c,C>0$ such that
\[
c\|\mf\|_p\le \| \mB \mf \|_{p,\Omega}\le C \|\mf\|_p, \qquad \mf \in
\R^{I_\Omega},
\]
The constants are only depending on $\n$ and $d$.
\end{theorem}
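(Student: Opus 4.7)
The plan is to prove the two inequalities separately and, in each case, to reduce the multivariate tensor product statement to its univariate counterpart, for which classical B-spline theory applies.

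\textbf{Upper bound.} The key ingredients are that each univariate B-spline family is nonnegative and forms a partition of unity on $[a_j,b_j]$, so that $B_\fa^\n(\fx)\ge 0$ and $\sum_{\fa\in\ii_{\m,\n}} B_\fa^\n(\fx) = 1$ on $R$. The $L^\infty$ bound $|\mB(\fx)\mf|\le \|\mf\|_\infty$ then follows immediately from the triangle inequality. For $1\le p < \infty$ I would use that at most $\prod_j n_j$ tensor product B-splines are nonzero at any single point, apply Jensen's inequality to rewrite $|\mB(\fx)\mf|^p$ as a convex combination of the $|f_\fa|^p$ weighted by $B_\fa^\n(\fx)$, and then integrate to obtain the stated $L^p$ bound, with a constant depending only on $\n$ and $d$.

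\textbf{Lower bound.} This is the essential difficulty and is the content of de Boor's classical stability theorem for B-splines. The standard route is to construct a dual basis: for each $\fa\in\ii_\Omega$ a bounded linear functional $\lambda_\fa$ with $\lambda_\fa(B_\fb^\n) = \delta_{\fa\fb}$ whose norm depends only on $\n$ and $d$. In the univariate case the de Boor--Fix functional, defined via derivatives of an auxiliary polynomial evaluated at an interior point of $S_\alpha^n$, serves this purpose, and a combinatorial argument shows that its norm is bounded purely in terms of the spline order, independently of the knot positions. Taking tensor products $\lambda_\fa := \otimes_{j=1}^d \lambda_{\alpha_j}$ yields the required multivariate dual functionals with norm bounds that multiply to a constant depending only on $\n$ and $d$. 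From $|f_\fa| = |\lambda_\fa(\mB\mf)| \le \|\lambda_\fa\|\,\|\mB\mf\|_{p,S_\fa^\n}$, raising to the $p$-th power, summing over $\fa\in\ii_\Omega$ and using the bounded overlap of the supports $S_\fa^\n$ (at any point of $\Omega$ at most $\prod_j n_j$ of them intersect) gives the lower bound.

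The main obstacle is the univariate lower bound, i.e.\ the construction of dual functionals whose norm is controlled solely by the spline order. Once this is available, the tensor product extension of the dual basis, the summation over the coefficient indices, and the combination by Fubini are routine and preserve the stated dependence of the constants on $\n$ and $d$ alone.
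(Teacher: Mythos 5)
Your route --- reduce to the univariate case via the tensor product structure, get the upper bound from nonnegativity and partition of unity, and get the lower bound from de Boor--Fix dual functionals --- is exactly the classical argument that the paper's citation to de Boor points at; the paper itself offers no proof beyond that citation, so in approach you and the paper agree, and you actually supply the substance.

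There is, however, a genuine gap for $1\le p<\infty$, one that your sketch shares with the literal statement of the theorem. In your upper bound, integrating $\sum_{\fa}|f_\fa|^p B_\fa^\n(\fx)$ over $R$ yields $\sum_\fa |f_\fa|^p\int_R B_\fa^\n$, and $\int_R B_\fa^\n$ is comparable to $\vol(S_\fa^\n)$; this is a mesh-dependent weight, not a constant depending only on $\n$ and $d$. Dually, the de Boor--Fix functional $\lambda_\fa$ is bounded with an order-only constant as a functional on $C(S_\fa^\n)$, but its norm as a functional on $L^p(S_\fa^\n)$ necessarily scales like $\vol(S_\fa^\n)^{-1/p}$, so the sentence ``its norm is bounded purely in terms of the spline order'' is the step that fails as stated. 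The failure is already visible for $\n=1$: the B-splines are then characteristic functions of the knot cells and $\|\mB\mf\|_{p,R}^p=\sum_\fa|f_\fa|^p\,\vol(S_\fa^{\n})$, which is not uniformly comparable to $\|\mf\|_p^p$ when some cells shrink. Your argument therefore delivers order-only constants either for $p=\infty$, or for $p<\infty$ only after renormalizing the B-splines in $L^p$ (equivalently, weighting the discrete norm by $\vol(S_\fa^\n)^{1/p}$), or under a quasi-uniformity assumption on the knots --- which is how de Boor's theorem is actually formulated. You should restrict to $p=\infty$, insert the weights, or make the mesh-ratio dependence of $c$ and $C$ explicit.
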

Due to the tensor product structure and the multiple knots on the
boundary, this result can be proven analogously to the classical
univariate results for intervals, see, e.g., \cite{deB68}.
For further information about stability of tensor product B-spline
bases see for example \cite{MoessnerReif}. In addition to stability,
multiple knots on the boundary of $R$ help us to avoid artifacts at
the boundary since they result in an interpolation at the boundary.

For arbitrary $\Omega'\subset\R^d$, the inpainting problem considered here
can now be described in the following way:
Given two bounded domains $\Omega$ and $\Omega'$ such that
\begin{enumerate}
 \item $\Omega\subset\Omega'\subset\R^d$,
 \item there exists $R=\otimes_{j=1}^d[a_j,b_j]$ such that
 		$\Omega\subset R\subset\Omega'$ and
 		$\partial\Omega\cap\partial R=\emptyset$,
\end{enumerate}
reconstruct a function or picture $g$ on $\Omega$ that
is known only on $\Omega^{\ast}:=\Omega'\setminus\Omega$ by
using tensor product splines defined over $R$. The situation is
illustrated in Figure~\ref{fig:problem}.
\begin{figure}[ht]
\centering
\includegraphics[width=4cm]{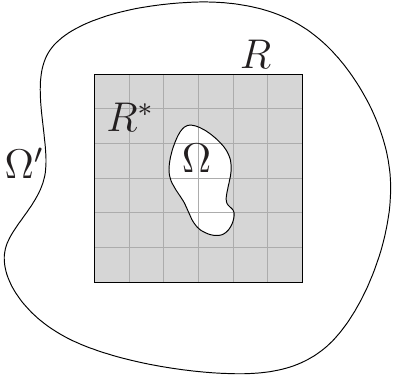}
\caption{Illustration of domains.}\label{fig:problem}
\end{figure}
We require that the spline fulfills a side
condition on $R^{\ast}:=R\setminus \Omega\subset\Omega^{\ast}$ and
minimizes a functional $F$ over some neighborhood $U(\Omega)$.
The general variational inpainting model using splines can now
be formulated as follows.
\begin{SIM}\label{SIM}
Let $U_{\lh}^{\n}(\Omega)$ be
some neighborhood of $\Omega$ only depending on $\lh$ and $\n$ with
$ \Omega\subseteq U_{\lh}^{\n}(\Omega)\subseteq R$.
The {\emph spline inpainting model} is given by:
Determine $s\in\TPSs(T,R)$ by
\begin{eqnarray*}
  \textrm{minimize}& 
  	\quad F(s,\nabla s,\dots)(\fx)
  						  & \textrm{for }\fx\in{U_{\lh}^{\n}(\Omega)}\\
  \textrm{subject to}&
  	\quad s(\fx)=g(\fx)&  \textrm{for }\fx\in R^\ast.
\end{eqnarray*}
\end{SIM}
\begin{remark}
Since hard constraints can be problematic, especially in the presence
of noise, the
minimization problem is often relaxed to
\begin{equation}\label{eq:Regularisation1}
\textrm{minimize}  \quad
F(s,\nabla s,\dots)|_{U_{\lh}^{\n}(\Omega)}+ 
\frac{\varepsilon}{2}\,
\| s-g\|_{2,R^{\ast}}^2\qquad \textrm{for some }\varepsilon>0.
\end{equation}
This relaxed formulation will be discussed briefly in
Section \ref{sec:Ex}. 
\end{remark}
For the explicit modeling or the discretization of the
Spline Inpainting model~\ref{SIM}, the concrete functional
in the minimization is, of course, crucial.
As already mentioned in Section~\ref{sec:Pre_Not}, we will follow the
most frequently used approach and focus
on TV inpainting \cite{CS02} and, therefore, on the ROF-functional
\cite{ROF92}. It seems worthwhile, however, to mention that the spline
approach also works with other functionals.
\begin{SIM}\label{TVSIM}
Let $U_{\lh}^{\n}(\Omega)$ be
some neighborhood of $\Omega$ only depending on $\lh$ and $\n$ with
$ \Omega\subseteq U_{\lh}^{\n}(\Omega)\subseteq R$.
The TV spline inpainting model is given by:
Determine $s\in\TPSs(T,R)$ by
\begin{eqnarray}\label{eq:min_cond}
  \textrm{minimize}& 
  	\quad \int_{U_{\lh}^{\n}(\Omega)}|\nabla s(\fx)|\, d\fx&\\
  	\label{eq:ide_cond}
  \textrm{subject to}&
  	\quad s(\fx)=g(\fx)&  \textrm{for }\fx\in R^\ast,
\end{eqnarray}
where $|\nabla s(\fx)| :=
\sqrt{\sum_{i=1}^d\big(\partial_i s(\fx)\big)^2}$.
\end{SIM}
In the next two subsections we show that the continuous
Spline Inpainting Models~\ref{SIM}-\ref{TVSIM} applied to discrete problems
(e.g., digital images) is already a discrete model, except for the
discretization of the integral. \\
Clearly, there are different possible interpretations of a discrete
image. In our situation, where we evaluate functions at points in $R$,
those interpretations influence the value of the image that we assume
to be at that points. Since it does not change the method or
modeling, we interpret, for the sake of simplicity,
the discrete values as
piecewise constant functions with constant values over rectangles.
Given an image $g$ consisting of $\mu_1 \times \cdots \times \mu_d$
pixels, those pixel rectangles are of the form
$P_{\fb}:=\otimes_{j=1}^d[a_j+(\beta_j-1)  p_j,a_j+\beta_j p_j)$
for $p_j:=(b_j-a_j)/\mu_j$ and $1\le \beta_j\le\mu_j$.
The value over $P_{\fb}$ is chosen corresponding to
the value at the {\emph center}
$c(P_{\fb}):=\big(a_j+(\beta_j-\frac{1}{2}) p_j\big)_{j=1}^d$;
the $j$-th coordinate will be denoted by
$c(P_{\fb})_j$. 
\subsection{Side condition}\label{sec:sidecondition}
In the reproduction of digital images on $R^\ast$, 
the side condition \eqref{eq:ide_cond} corresponds to the interpolation
of the pixel values in $R^\ast$.
In the space of tensor product splines
an interpolation problem is solvable if the number of interpolation
points is not greater than the degrees of freedom $\#(\ii_R)$ and if
there is a relationship between the interpolation sites and the knots,
known as the \emph{Schoenberg--Whitney condition}.
Since we also need some degrees of freedom for the minimization
problem over $U(\Omega)$, this forces us to choose a spline space of
sufficiently large dimension.
\begin{definition}
Given a tensor product grid $T$ over $R$ and a set of discrete points
$\Xi :=\{\fxi_{\fb}\in R|\,\fb\in\Z^d;\#(\fb)\le \#(\ii_R)\}$, let
$\lambda_{\fb}$
be the point evaluation functionals $\lambda_{\fb}(f):=f(\fxi_{\fb})$,
$f : \Xi \to \R$. A spline
$s\in \TPSs(T,R)$ is the {\em spline interpolant of a function
$f$ at $\Xi$}
if
\begin{equation*}
	\lambda_{\fb}(s)
	=
	\sum_{\fa\in \ii_R}f_{\fa} B_{\fa}^{\n}(\fxi_{\fb})
	=
	\lambda_{\fb}(f),\qquad
	\fxi_{\fb}\in\Xi.
\end{equation*}
\end{definition}
For functions defined over $R$ and grids T satisfying
\eqref{eq:knots} and \eqref{eq:boundary_knots} this interpolation
problem is uniquely solvable if the Schoenberg-Whitney
condition (see, e.g., \cite{HH13}) is satisfied. Since $R$ is a rectangle, this
can be guaranteed by choosing the tensor product \emph{Greville abscissae}
as interpolation sites, that is, by setting
\begin{equation}\label{eq:greville} 
	\Xi_{Gr}
	:=
	\bigg\{
	\fxi_{\fg}\in\R^d|\xi_{\fg,j}
	:=
	\frac{1}{n_j-1}\sum_{i=1}^{n_j-1}\tau_{j,\gamma_j+i},\,\fg\in \ii_R
	\bigg\}.
\end{equation}
It should be noted that the position of the Greville abscissae depends
on the position of the knots of $T$, hence, the
interpolation points depend on $T$. Therefore, the
tensor product knot grid must be chosen according to the interpolation
points. In case of digital images, where each rectangle $P_{\fb}$
represents
a pixel of the image, we have the following result for the reproduction
of all known image pixel values.
\begin{theorem}\label{la:sidecond}
Given an image $g$ of size $\otimes_{j=1}^{d}\mu_j$, an
area $R^\ast:=\cup_{\fb\in \iJ_{R^\ast}}P_{\fb}$,
$
\iJ_{R^\ast}
:=
\{\fb\in\Z^d:1\le \beta_j\le \mu_j,\,1\le j\le d\text{ and }
c(P_{\fb})\in R\setminus\Omega\}
$
of known pixel values 
and an order $\n$ of a spline space,
there exist a knot grid $T$ and a set of interpolation points $\Xi^\ast$
such that
	\[
	\{c(P_{\fb})\}_{\fb\in \iJ_{R^\ast}}\subset\Xi^{\ast}
	\qquad \text{and}\qquad
	\lambda_{\fg}(s)
	=
	\lambda_{\fg}(g),\qquad
	\fxi_{\fg}\in\Xi^{\ast},
	\]
for $s\in\TPSs(T,R)$.
\end{theorem}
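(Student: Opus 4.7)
The plan is to proceed constructively and coordinate-wise. Because $T$, the Greville set $\Xi_{Gr}$ in \eqref{eq:greville}, and the pixel grid all carry a tensor-product structure, it is enough to construct, in each coordinate direction $j$, a one-dimensional knot sequence $\tau_{j,\cdot}$ satisfying \eqref{eq:knots}--\eqref{eq:boundary_knots} whose associated Greville abscissae $\xi_{\gamma,j}=\tfrac{1}{n_j-1}\sum_{i=1}^{n_j-1}\tau_{j,\gamma+i}$ include the $\mu_j$ equidistant pixel-center coordinates $a_j+(\beta-\tfrac{1}{2})p_j$ for $\beta=1,\ldots,\mu_j$. Taking the tensor product of these one-dimensional sequences then produces $T$, and every pixel center $c(P_\fb)$ becomes automatically a tensor-product Greville abscissa.

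For fixed $j$, I would determine the interior knots by imposing $\xi_{\gamma,j}=c(P_\fb)_j$ for a suitable bijection between a range of indices $\gamma$ and $\beta=1,\ldots,\mu_j$; this yields a banded linear system in the $m_j$ unknown interior knots, which is solvable as soon as $m_j$ is taken large enough. The most transparent case is odd $n_j$, where uniform interior knots of spacing $p_j$ already make the interior Greville abscissae coincide with the pixel centers. The two outermost Greville abscissae $\xi_1=a_j$ and $\xi_{m_j+n_j}=b_j$ are forced by the $n_j$-fold boundary multiplicity and need not equal any pixel center.

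Once $T$ has been built, I would set $\Xi^\ast:=\{c(P_\fb):\fb\in\iJ_{R^\ast}\}\subseteq\Xi_{Gr}$, which directly gives the inclusion required by the theorem. Because the tensor-product Greville abscissae of a grid of the form \eqref{eq:knots}--\eqref{eq:boundary_knots} satisfy the Schoenberg--Whitney condition, the full square collocation matrix $\bigl(B^{\n}_\fa(\fxi_\fg)\bigr)_{\fg,\fa\in\ii_R}$ is nonsingular. Its submatrix formed by keeping only the rows indexed by $\Xi^\ast\subset\Xi_{Gr}$ therefore has full row rank, so the generally underdetermined interpolation system $\lambda_\fg(s)=\lambda_\fg(g)$ for $\fxi_\fg\in\Xi^\ast$ admits at least one solution $s\in\TPSs(T,R)$.

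The hardest step is the first one: showing that an admissible knot sequence with the required Greville alignment actually exists for every order $n_j$. Since the pattern of Greville abscissae for uniform knots differs between the odd-order and even-order cases, a uniform placement will not always suffice, and some non-uniform adjustment near the boundary---together with a slightly larger $m_j$---may be needed in general. I expect this to be a bounded, essentially one-dimensional combinatorial matter rather than a structural difficulty, since the target pixel centers are equidistant and one may choose as many free interior knots as one wishes.
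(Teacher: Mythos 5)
Your reduction to one coordinate, your final linear-algebra step (Greville abscissae satisfy Schoenberg--Whitney, hence the full collocation matrix is nonsingular, hence the row-submatrix indexed by $\Xi^\ast$ has full row rank and the underdetermined system is consistent), and your overall tensor-product strategy all match the paper. The problem is that you have left open precisely the step that constitutes the actual content of the proof. Your plan is to choose \emph{non-uniform knots} so that the Greville abscissae themselves land on all pixel centers, and you concede that near the boundary this requires ``some non-uniform adjustment'' whose existence you only ``expect.'' That is a genuine gap, not a routine verification: with the forced $n_j$-fold boundary knots, the first $n_j-1$ Greville abscissae cluster toward $a_j$ (for uniform interior knots they sit at $a_j+\frac{(\gamma_j-1)\gamma_j}{2(n_j-1)}p_j$, e.g.\ $a_j+\tfrac14 p_j$ for $n_j=5$, missing $c(P_1)_j=a_j+\tfrac12 p_j$), so it is not just the two endpoints $\xi_1=a_j$ and $\xi_{m_j+n_j}=b_j$ that fail to align, as you suggest. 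Forcing alignment by solving for the knots produces a recursion $\tau_{\gamma+n_j}=\tau_{\gamma+1}+(n_j-1)p_j$ whose solution has large, highly non-uniform clusters of coincident interior knots; one must then check admissibility $\tau_{j,i}<\tau_{j,i+n_j}$ \emph{and} that the left-boundary transient can be matched consistently with the right-boundary one (a divisibility/counting issue in $\mu_j$ and $n_j$), and you verify none of this. As written, the proposal assumes the theorem's hard part.

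The paper resolves this differently, and it is worth seeing why its route closes the gap cheaply. It keeps \emph{uniform} interior knots (spacing $p_j$, shifted by $\tfrac12 p_j$ for even $n_j$), computes the Greville abscissae explicitly, and observes that the interior ones already coincide with interior pixel centers. For the finitely many boundary pixel centers that are missed, it proves the quantitative claim that some Greville abscissa lies within $\tfrac12 p_j$ of each such center, and then \emph{moves the interpolation site} (not the knot) onto that center. Because the perturbation is at most half a grid cell, the perturbed site stays inside the support $S^{n_j}_{\gamma_j}$ of the corresponding B-spline, so $B^{n_j}_{\gamma_j}(\tilde\xi_{\fg,j})>0$ and the Schoenberg--Whitney condition survives; solvability then follows exactly as in your last step. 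If you want to salvage your version, you would have to either carry out the non-uniform knot construction in full (including admissibility and the two-sided boundary matching for both parities of $n_j$), or switch to the paper's perturbation-of-sites argument, for which the only nontrivial ingredient is the half-cell proximity claim.
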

\begin{proof}
  Due to the tensor product structure, it suffices to consider a
  single coordinate direction $j$ for the
construction of the grid and set of interpolation points.

In doing so, we have to distinguish between even and odd values of
$n_j$.
We set $m_j=\mu_j-1$ for $n_j$ odd and $m_j=\mu_j$ otherwise, and choose
\[
\begin{cases}
\tau_{j,n_j+k_j}:=a_j+k_j\cdot p_j
				& n_j \text{ odd},\\
\tau_{j,n_j+k_j}:=a_j+(k_j-\frac{1}{2})\cdot p_j
				& n_j \text{ even}
\end{cases}
\]
for $1\le k_j\le m_j$ and $p_j:=\frac{b_j-a_j}{\mu—j}$. The boundary
knots are determined with proper multiplicity according to
\eqref{eq:boundary_knots}.
Recalling \eqref{eq:greville}, the $j$-th coordinates of the
associated Greville abscissae for odd $n_j$ are given by
\begin{equation*}
\xi_{\fg,j}=
\begin{cases}
a_j
&
\text{for }\gamma_j =1,
\\
a_j + \frac{1}{n_j-1}\big(\sum_{\ell=1}^{\gamma_j-1}\ell\big)\,p_j
&
\text{for }2\le \gamma_j\le n_j-1,
\\
a_j + \frac{1}{n_j-1}\big(\sum_{\ell=\gamma_j+1-n_j}^{\gamma_j-1}\ell\big)\,p_j
&
\text{for }n_j\le \gamma_j\le m_j+1,
\\
b_j - \frac{1}{n_j-1}\big(\sum_{\ell=1}^{m_j+n_j-\gamma_j}\ell\big)\,p_j\phantom{p.}
&
\text{for }m_j+2\le \gamma_j\le m_j+n_j-1,
\\
b_j
&
\text{for }\gamma_j =m_j+n_j.
\end{cases}
\end{equation*}
Replacing the terms $\ell$ of the sums by $\ell-1/2$, results in the Greville
abscissae for even $n_j$, respectively. Therefore, we have
\begin{equation*}
  \xi_{\fg,j}=
  \begin{cases}
    a_j + \frac{(\gamma_j-1)\gamma_j}{2(n_j-1)}\,p_j
    &
    \text{for }1\le \gamma_j\le n_j-1,
    \\
    a_j + \big(\gamma_j-\frac{n_j}{2}\big)\,p_j
    &
    \text{for }n_j\le \gamma_j\le m_j+1,
    \\
    b_j - \frac{(m_j+n_j-\gamma_j+1)(m_j+n_j-\gamma_j)}{2(n_j-1)}\,p_j
    &
    \text{for }m_j+2\le \gamma_j\le m_j+n_j.
  \end{cases}
\end{equation*}
for odd $n_j$ and
\begin{equation*}
\xi_{\fg,j}=
\begin{cases}
a_j
&
\text{for }\gamma_j =1,
\\
a_j + \frac{(\gamma_j-1)^2}{2(n_j-1)}\,p_j
&
\text{for }2\le \gamma_j\le n_j-1,
\\
a_j + \big(\gamma_j-\frac{n_j+1}{2}\big)\,p_j
&
\text{for }n_j\le \gamma_j\le m_j+1,
\\
b_j - \frac{(m_j+n_j-\gamma_j)^2}{2(n_j-1)}\,p_j
&
\text{for }m_j+2\le \gamma_j\le m_j+n_j-1,
\\
b_j
&
\text{for }\gamma =m_j+n_j.
\end{cases}
\end{equation*}
for even $n_j$, respectively.
Now $\xi_{\fg,j}$ coincides, for $n_j\le\gamma_j\le m_j+1$,
with $c(P_{\fb})_j$ for some
$1\le\beta_j\le \mu_j$; more precisely, with centers $c(P_{\fb})_j$
such that
\[
\beta_j\in\mathring{\ii}:=
	\begin{cases}
	\{\frac{n_j+1}{2},\dots,\mu_j-\frac{n_j-1}{2}\}
		& \text{for odd } n_j\\
	\{\frac{n_j}{2},\dots,\mu_j+1-\frac{n_j}{2}\}
		&\text{for even } n_j
	\end{cases}.
\]
The Greville abscissae of \emph{boundary B-splines}, that is, of
B-splines with some knots at $a_j$ or $b_j$, are not necessarily
placed at a center, but they can be replaced
by the center that is closest to them without loss of the Schoenberg-Whitney
condition $B_{\gamma_j}^{n_j}(\xi_j)>0$, due to property
\eqref{eq:positivB}.

We now claim:
\emph{For every $\beta_j\in\{1,\dots,\mu_j\}\setminus \mathring{\ii}$,
there exists $\gamma_j$ such that 
\[
-\frac{1}{2}p_j<\xi_{\fg,j}-c(P_{\beta})_j\le\frac{1}{2}p_j.
\]}
There are four cases to be considered: small and large $\beta_j$ (or
$\gamma_j$) and even or odd order. We prove the claim for small
$\beta_j$ or $\gamma_j$
and odd order, the other cases can be verified in the same way.
For $\gamma_j\in\{2,\dots,n_j-1\}$ and odd $n_j$ we have
\[
\xi_{\fg,j}-c(P_{\beta})_j =
 \Big(\frac{(\gamma_j-1)\gamma_j}{2(n_j-1)}-\beta_j+\frac{1}{2}\Big)
 	\cdot p_j.
\]
Therefore, we need to show that for every
$\beta_j\in\{1,\dots,(n_j-1)/2\}$ there exists a
$\gamma_j\in\{2,\dots,n_j-1\}$ such that
\[
\beta_j-1 < \frac{(\gamma_j-1)\gamma_j}{2(n_j-1)}\le \beta_j.
\]
Set $f(\gamma_j):=\frac{(\gamma_j-1)\gamma_j}{2(n_j-1)}$. We have
$0<f(2)\le 1$ and $\frac{n_j-1}{2}-1<f(n_j-1)\le\frac{n_j-1}{2}$.
As a consequence, $0<f(\gamma_j)\le \frac{n_j-1}{2}$ for all
$\gamma_j\in\{2,\dots,n_j-1\}$.
Clearly, $f(\gamma_j)$ is monotonically increasing and $\#\gamma_j=n_j-2$
distributed over $(n_j-1)/2$ intervals. Further,
$f(\gamma_j)-f(\gamma_j-1)=\frac{\gamma_j-1}{n_j-1}\le 1$ and, thus, the
claim is valid.

Having this property at hand, we replace those $\xi_{\fg,j}$ by
$c(P_{\beta})_j$ that are closest to the center, to guarantee that all
centers are contained in the set of interpolation points.
For $\beta\in \{1,\dots,\mu_j\}\setminus \mathring{\ii}$
set
\[
\tilde{\ii}:=\Big\{\gamma_j\in\{1,\dots,m_j+n_j\}:
\xi_{\fg,j}=\min_{\tilde{\gamma}_j}
	\{\argmin_{\xi_{\tilde{\fg},j}}|\xi_{\tilde{\fg},j}-c(P_{\beta})_j|\}\Big\}
\]
and
\[
\tilde{\xi}_{\fg,j}:=\argmin_{c(P_{\beta})_{j}}
	|\xi_{\tilde{\fg},j}-c(P_{\beta})_j|
\quad\text{for }\gamma_j\in\tilde{\ii}.
\]
The set
\begin{equation*}
\Xi_j:=\big\{\xi_{\fg,j}:\,
		\gamma_j\in\{1,\dots,m_j+n_j\}\setminus \tilde{\ii}
	  \big\}
	 \cup 
	  \big\{\tilde{\xi}_{\fg,j}:\,
		\gamma_j\in \tilde{\ii}
	  \big\}
\end{equation*}
gives us the $j$-th coordinate of $\Xi$, the set of possible interpolation
points. Finally, the set $\Xi^{\ast}$ of interpolation points is determined by restriction
of ${\Xi}$ to $R^{\ast}$,
that is
$\fxi_{\fg}\in\Xi^{\ast}:={\Xi}\cap R^{\ast}$.
By construction, $c(P_{\fb})\in\Xi^{\ast}$ for $\fb\in \iJ_{R^{\ast}}$
and there exists a spline interpolant $s\in\TPSs(T,R)$ for $g$ at
$\Xi^{\ast}$.
\end{proof}
In the sequel, $\Xi$ and $\Xi^{\ast}$ always will refer to the sets of
interpolation sites determined in the way described in the proof of
Theorem \ref{la:sidecond}.
For applications we also want to describe the
side conditions in matrix notation. For the coefficients
$\mathbf{f}=\big(f_{\fa}\big)_{\fa\in \ii_R}\in\R^{\#(\ii_R)}$, we get
\begin{equation}\label{eq:interpolant}
	\mathbf{s}_{\Xi^{\ast}}
	=
	\mathbf{B}_{\Xi^{\ast}} \mathbf{f}
	=
	\mathbf{g}_{\Xi^{\ast}}
\end{equation}
with
\[
\mathbf{g}_{\Xi^{\ast}}
=
\big(g(\xi_{\fb})\big)_{\fb\in{\Xi^{\ast}}}\in\R^{\#({\Xi^{\ast}})}, \qquad
\mathbf{B}_{\Xi^{\ast}}
=
\bigg[
B_{\fa}^{\n}(\xi_{\fb}):
	\begin{array}{l}
	\xi_{\fb}\in{\Xi^{\ast}}\\
	\fa \in \ii_R
	\end{array}			 
\bigg]
\in\R^{\#({\Xi^{\ast}})\times\#(\ii_R)}.
\]

\subsection{Minimization}
Depending on $\Omega$, there are still some degrees of freedom left for
minimization; their number is given by $\# (\Xi\setminus\Xi^{\ast})$.
We define $U_{T}^{\n}(\Omega)$ 
as the union of all
supports of B-splines corresponding to
$\xi_{\fa}\in\Xi\setminus\Xi^{\ast}$, that is
\begin{equation}\label{eq:U}
U_{T}^{\n}(\Omega)
:=
\bigcup_{\fa\in \ii_R\setminus\ii_{\Xi^{\ast}}} S^{\n}_{\fa}.
\end{equation}
The index set of all cells in $U_{T}^{\n}(\Omega)$ is denoted by
$\ik_\Omega
:=
\{\fk\in \ik|\,Z_{\fk}\subset U_{T}^{\n}(\Omega)\}$. 

\begin{Pro}\label{la:min}
Given $T$ and $\Omega\subset R$. For $U_{T}^{\n}(\Omega)$ defined
as in \eqref{eq:U}, the minimization problem \eqref{eq:min_cond}
reduces to
\[
\min_{\mf}
\quad
\sum_{\fk\in\ik_\Omega}
\int_{Z_{\fk}}|\big(\mB^{j}(\fx)\mf\big)_{j=1}^d|\, d\fx,
\]
where $\mf=\big(f_{\fa}\big)_{\fa\in \ii_\Omega}$, $f_{\fa}\in\R$,
and $[\mB^{j}(\fx)]^T
:=\big(\partial_jB_{\fa}^{\n}(\fx)\big)_{\fa \in \ii_R}
\in\R^{\#(\ii_R)}$.
\end{Pro}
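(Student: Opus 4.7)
The plan is to verify the reduction by (a) translating the integrand into a function of the coefficient vector via the derivative formula for B-splines, (b) partitioning $U_T^{\n}(\Omega)$ into grid cells, and (c) observing that only the coefficients indexed by $\ii_\Omega$ actually influence the functional.

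First, I would apply \eqref{eq:derivativeS} coordinatewise to write $\partial_j s(\fx) = \sum_{\fa \in \ii_R} f_\fa\,\partial_j B_\fa^{\n}(\fx) = \mB^{j}(\fx)\mf$. Taking the Euclidean norm in $\R^d$ of the vector of partial derivatives then yields $|\nabla s(\fx)| = |(\mB^{j}(\fx)\mf)_{j=1}^d|$ pointwise, so the integrand in \eqref{eq:min_cond} is already expressed in the claimed algebraic form.

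Next, I would decompose the integration domain. Each B-spline support $S_\fa^{\n}$ is, by \eqref{eq:positivB} and the tensor product structure of the cells $Z_\fk$, exactly a union of grid cells; hence the finite union $U_T^{\n}(\Omega) = \bigcup_{\fa \in \ii_R \setminus \ii_{\Xi^*}} S_\fa^{\n}$ is again a union of grid cells whose interiors are pairwise disjoint. This gives $U_T^{\n}(\Omega) = \bigcup_{\fk \in \ik_\Omega} Z_\fk$, and countable additivity of the Lebesgue integral yields
\[
\int_{U_T^{\n}(\Omega)} |\nabla s(\fx)|\, d\fx
= \sum_{\fk \in \ik_\Omega} \int_{Z_\fk} |(\mB^{j}(\fx)\mf)_{j=1}^d|\, d\fx.
\]

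The last point, and the only spot that needs a small argument, is the reduction of the coefficient vector from $\ii_R$ to $\ii_\Omega$. On a cell $Z_\fk$ with $\fk \in \ik_\Omega$, the only B-splines contributing to $\partial_j s$ are those with $Z_\fk \subset S_\fa^{\n}$; since $Z_\fk \subset U_T^{\n}(\Omega) \subseteq R$ and any such $Z_\fk$ lies within the support of some basis B-spline whose Greville abscissa lies in $\Omega$, a routine check against the definition of $\ii_\Omega$ gives $\fa \in \ii_\Omega$ for every index that contributes to a nonzero term. The remaining coefficients $f_\fa$ with $\fa \in \ii_R \setminus \ii_\Omega$ are multiplied by derivative values that vanish identically on $U_T^{\n}(\Omega)$, so they drop out and the minimization over $\mf$ may be taken over $\R^{\#(\ii_\Omega)}$, yielding exactly the form claimed in the proposition. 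The main technical point in the whole argument is this cell-wise verification that no B-spline outside $\ii_\Omega$ contributes; everything else is just substitution and additivity.
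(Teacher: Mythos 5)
Your first two steps coincide exactly with the paper's own (very short) proof: apply \eqref{eq:derivativeS} to write $\nabla s(\fx)=\big(\mB^{j}(\fx)\mf\big)_{j=1}^d$, and split the integral over $U_{T}^{\n}(\Omega)$ into the cells $Z_{\fk}$, $\fk\in\ik_\Omega$, which is legitimate because every support $S_{\fa}^{\n}$, and hence the finite union \eqref{eq:U}, is a union of grid cells. That part is correct and is all the paper does.

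Your third step, however, rests on a false claim. You assert that every B-spline that is nonzero on some cell $Z_{\fk}\subset U_{T}^{\n}(\Omega)$ must have its index in $\ii_\Omega$, i.e.\ must have support meeting $\Omega$, so that the remaining coefficients ``drop out.'' This fails for higher orders: $U_{T}^{\n}(\Omega)$ is the union of the supports $S_{\fa}^{\n}$ of those B-splines whose (adjusted) Greville abscissae lie in $\Omega$, and each such support extends up to $n_j-1$ knot intervals beyond $\Omega$ in every direction; a B-spline $B_{\fb}^{\n}$ whose support overlaps the far end of such an $S_{\fa}^{\n}$ need not touch $\Omega$ at all. Concretely, in one variable with uniform knots $\tau_k=k$, order $n=5$ and $\Omega=[5,6]$, the only Greville abscissa in $\Omega$ is $\xi_3=5.5$, so $U_{T}^{\n}(\Omega)=S_3^5=[3,8]$; the B-spline $B_7^5$ with support $[7,12]$ is nonzero on the cell $[7,8]\subset U_{T}^{\n}(\Omega)$ although $[7,12]\cap[5,6]=\emptyset$, i.e.\ $7\notin\ii_\Omega$. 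Hence coefficients with indices in $\ii_R\setminus\ii_\Omega$ do in general enter the integrand; they are not annihilated, they are merely \emph{fixed} by the side condition \eqref{eq:interpolant} and therefore are not free variables of the minimization. The indexing mismatch you tried to repair ($\mf$ over $\ii_\Omega$ versus $\mB^{j}$ over $\ii_R$) is a wrinkle in the statement itself, which the paper's proof silently ignores and which the subsequent discrete model resolves by taking $\mf\in\R^{\#(\ii_R)}$ throughout; the correct reading is that the coefficient vector ranges over $\ii_R$ with the constrained entries determined by \eqref{eq:ide_cond}, not that the extra basis functions vanish on $U_{T}^{\n}(\Omega)$.
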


\begin{remark}
It is worthwhile to point out that the derivatives
$\partial_jB_{\fa}^{\n}(\fx)$
are just weigh\-ted differences of B-splines of lower order,
the explicit formula being given in Section \ref{sec:Pre_Not}. This
fact is of crucial importance for the efficient implementation of the
method as it leads to sparse difference matrices.
\end{remark}

\begin{proof}[Proof of Proposition \ref{la:min}]
The result follows by applying \eqref{eq:derivativeS}
and \eqref{eq:U}.
The first one directly gives us
$\nabla s(x) = \big(\mB^{j}(\fx)\mf\big)_{j=1}^d$ with unknown
coefficients $\mf$. The area of integration $U_{T}^{\n}(\Omega)$
consists of a union of grid cells and, thus, the integral can be
split up into those grid cells $Z_{\fk}$ for $\fk\in\ik_\Omega$.
\end{proof}

Typically, for digital images the integration and the
derivatives need to be discretized, as already mentioned in
Section~\ref{sec:Intro}. 
In our case, we only need to discretize the integral and the
way the problem is modeled helps with that, too: the
discretization of the integrals can be done simply and efficiently by
using a Gaussian quadrature formula for the grid cells
$Z_{\fk}$ for $\fk\in\ik_\Omega$.
The tensor product structure allows us to use the univariate
Gauss-Legendre quadrature formula in each coordinate direction. Let
$\Theta\subset U_{T}^{\n}(\Omega)$ be the set of the nodes of the
Gauss-Legendre quadrature and $w_{\ftheta}$ be the corresponding
weights.
We get
\begin{equation*}
\sum_{\fk\in\ik_\Omega}
\int_{Z_{\fk}}|\big(\mB^{j}(\fx)\mf\big)_{j=1}^d|\, d\fx
\approx
\sum_{\ftheta}w_{\ftheta}\,
	\big\|\big(
	\mB^{j}(\ftheta)\mf
	\big)_{j=1}^d\big\|_2,
\end{equation*}
where $\|\cdot\|_2$ denotes the Euclidean norm.

\begin{dSIM}\label{dTVSIM}

Using Proposition \ref{la:min} and \ref{la:sidecond} (or \eqref{eq:interpolant})
the discrete form of TV Spline Inpainting Model \ref{TVSIM} is given by:
Determine $s\in\TPSs(T,R)$ by
\begin{eqnarray*}
  \textrm{minimize}_{f_{\fa}}& 
  	\quad \sum_{\ftheta\in\Theta}w_{\ftheta}\,\|\mB_{\ftheta}\mf\|_2&\\
  \textrm{subject to}&
  	\quad \mathbf{B}_{\Xi^{\ast}} \mathbf{f}=\mathbf{g}_{\Xi^{\ast}}
\end{eqnarray*}
where $\mB_{\ftheta}
:=\big(\mB^{j}(\ftheta)\big)_{j=1}^d\in\R^{d\times\#(\ii_R)}$.
\end{dSIM}
This approach is also suitable for other types of Spline
Inpainting Models \ref{SIM}, as long as the functionals
depend on the function and its derivatives and it can be applied as
soon as the explicit functional is given.
\section{Numerical implementation and experiments}\label{sec:Ex}

Define the convex operators $F:\R^{d\times\#(\Theta)}\rightarrow\R$ and $G:\R^{\#(\ii_\Omega)}\rightarrow\R$ with 
\begin{align*}
F(x_1,\ldots,x_{\#(\Theta)})=\sum\limits_{k=1}^{\#(\Theta)}\|x_k\|_2,
&& G(\mf) = \begin{cases}
0 & \mB_{\Xi^{\ast}}\mf=\mathbf{g}_{\Xi^{\ast}}, \\
\infty & \text{otherwise},
\end{cases}
&& 
\end{align*}
and the linear operator $K:\R^{\#(\ii_\Omega)}\rightarrow\R^{2\times\#(\Theta)}$ with $K(\mf)=(w_{\ftheta}\mB_{\ftheta}\mf)_{\ftheta\in\Theta}$. Using this, the Discrete Spline Inpainting Model 1 can be reformulated as an optimization problem in the following way:

\begin{OP}\label{OP}
Our Spline Inpainting Model~\ref{SIM} is equivalent to the unconstrained problem
\begin{eqnarray*}
\textrm{minimize} &\quad & F(K\mf)+G(\mf).
\end{eqnarray*}
\end{OP}

This formulation is suitable for the alternating method of multipliers (ADMM) \cite{CP11}. Therefore, the $\lambda$-proximity operators $\mathop{prox}_{\lambda F^*}$, $\mathop{prox}_{\lambda G}$ need to be calculated. Note that the convex conjugate function $F^*:\R^{d\times\#(\Theta)}\rightarrow\R$ is given by
\begin{align*}
F^*(y_1,\ldots,y_{\#(\Theta)})=\begin{cases}0 & \|y_k\|_2\leq1,\ \forall k\leq\#(\Theta) \\ \infty & \text{otherwise}\end{cases}.
\end{align*}
Both, $F^*$ and $G$ are indicator functions and, thus, the proximity operators are equivalent to the projections:
\begin{align*}
\mathop{prox}_{\lambda F^*}(y_1,\ldots,y_{\#(\Theta)})&=\left(\frac{y_1}{\max(1,\|y_1\|_2)},\ldots,\frac{y_{\#(\Theta)}}{\max(1,\|y_{\#\Theta}\|_2)}\right), \\
\mathop{prox}_{\lambda G}(\mf)&=\mf-\mB_{\Xi^{\ast}}^{+}(\mB_{\Xi^{\ast}}\mf-\mathbf{g}_{\Xi^{\ast}}),
\end{align*}
where $\mB_{\Xi^{\ast}}^{+}$ is the pseudoinverse of
$\mB_{\Xi^{\ast}}$. We remark that both operators are
independent of $\lambda$. In our experiments, we use the MATLAB
implementation of ADMM due to
Gabriel Peyre \cite{PHP} and apply the tests to
several
cartoon-like images and natural images in different sizes; some are
shown in Figure \ref{fig_testData}. 

\begin{figure}
\centering
\includegraphics[width=0.49\textwidth]{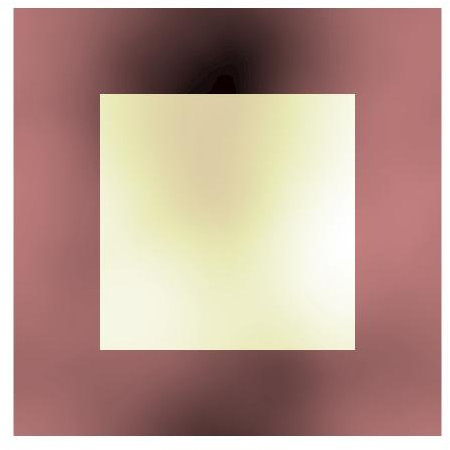}
\includegraphics[width=0.49\textwidth]{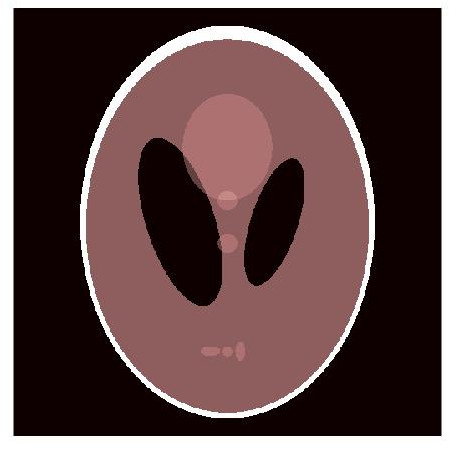}\\
\includegraphics[width=0.49\textwidth]{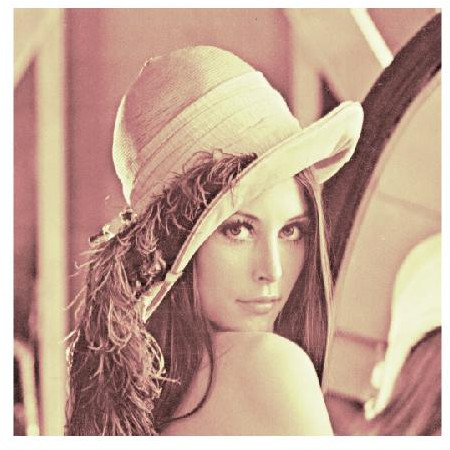}
\includegraphics[width=0.49\textwidth]{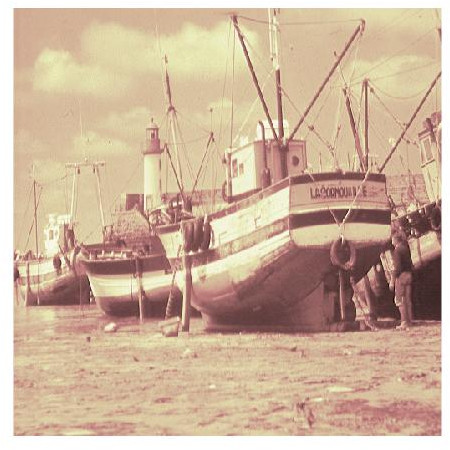}
\caption{Exemplary test data: Cartoon-like images (top) and natural images.}
\label{fig_testData}
\end{figure}

We consider two different scenarios for the inpainting region $\Omega$. In one scenario, the image is damaged by one or several "scratches" of variable width; in the second case we consider randomly missing pixels similar to "salt-and-pepper" noise. Figure \ref{fig_noise} gives an example for both types (in comparison to Figure \ref{fig_testData}
the contrast is changed to emphasize the inpainting area).

\begin{figure}
\centering
\includegraphics[width=0.49\textwidth]{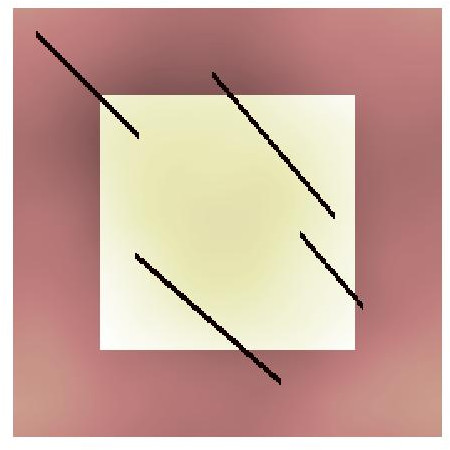}
\includegraphics[width=0.49\textwidth]{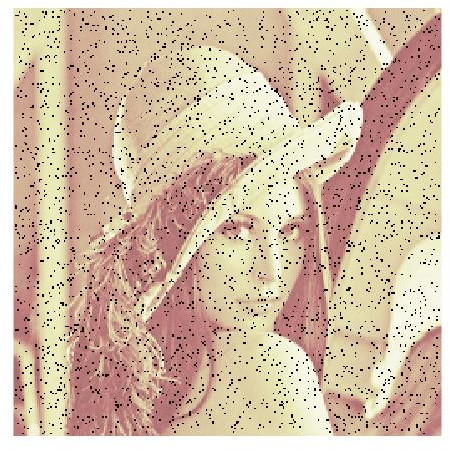}
\caption{Considered inpainting area types: Scratches (left) and randomly missing pixels (right).}
\label{fig_noise}
\end{figure}

In the next subsection we discuss the optimal spline order for the
inpainting problem as well as reasonable strategies to guess a
starting value for ADMM. We compare our results with the standard TV
inpainting using, again, an implementation by Gabriel Peyre
\cite{PHP}. Afterwards, we demonstrate the benefits of our method by
two examples: Text removal and salt-and-pepper denoising.

\subsection{Numerical evaluation}

\subsubsection{Starting guess}

Both implementations, spline inpainting using ADMM and standard TV
inpainting, are iterative methods. Thus, they rely on a suitable
initial value to return a good solution after a reasonable number of
iterations. We tried several strategies among which two stood out and will
be detailed in what follows. In a first strategy we choose random
uniformly distributed values in $[0,255]$ (for standard grayscale
images). Note that standard TV inpainting directly iterates on the
pixel values of the
inpainting area while our spline approach works on the spline coefficients
as its variables. Thus, although both algorithms use uniformly distributed
values, the starting guess strategies differ slightly. For our second
strategy, we calculate the mean value $\omega_{\text{mean}}$ of the
image outside the inpainting area. In contrast to that, standard TV
sets the pixels inside
the inpainting area to $\omega_{\text{mean}}$. Our spline
approach uses the starting values $\mf=\mathop{prox}_{\lambda
  G}(\omega_{\text{mean}}{\mathbf 1})$, where $\mathbf{1}$ is a
vector of ones. Since splines form a partition of unity, the
coefficient vector $\omega_{\text{mean}}{\mathbf 1}$ generates a
constant image with value $\omega_{\text{mean}}$. Using
$\mathop{prox}_{\lambda G}$, this vector is projected onto the
interpolation space. 

Figure \ref{fig_starting} illustrates the mean signal-to-noise ratio (SNR) for both starting strategies using $100$ 
experiments on several images ($128\times128$ pixels) and inpainting areas with $100$ iterations of ADMM. We use two types of inpainting areas: on the one hand $3\%$ of the pixels are randomly set to zero (left), on the other hand $3$ scratches with a $4$ pixel width are used (right). The mean SNR is calculated for cartoon-like images (top) and natural images (bottom) separately.

We see that the mean value starting guess performs much better on
natural images and in case of cartoon like images the SNR values are
at least of the same order as for a random starting guess. Only for
standard TV on scratch inpainting areas the reconstruction quality of
the random method outperforms the mean value. Thus, we suggest to use
the mean value starting guess for spline inpainting, while for
standard TV a random guess on cartoon-like images and the mean value
on natural images should be chosen. This will also be the strategies
used in the following experiments.

Figure \ref{fig_start_example} shows the reconstruction of the "Lena"
image shown in Figure \ref{fig_noise} with randomly missing pixels. We
use splines of order $2$; first with random starting guess and a
second time with mean value. The obtained SNR values are $20.00$ and
$26.47$, respectively.

\begin{figure}
\centering
\includegraphics[width=0.49\textwidth]{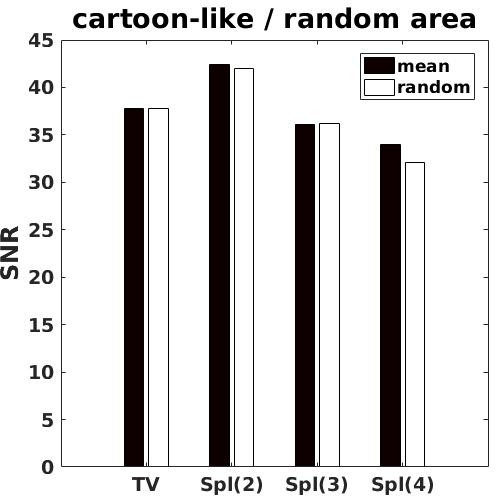}
\includegraphics[width=0.49\textwidth]{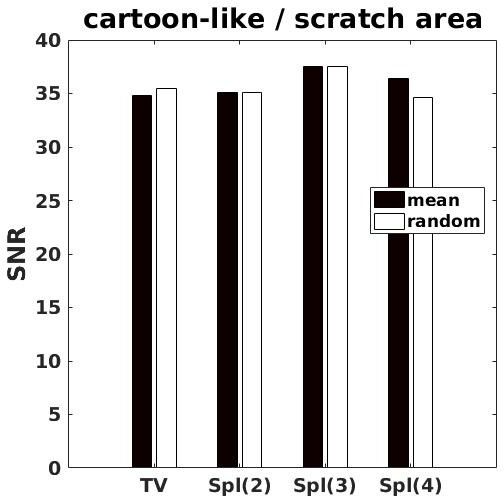}\\
\includegraphics[width=0.49\textwidth]{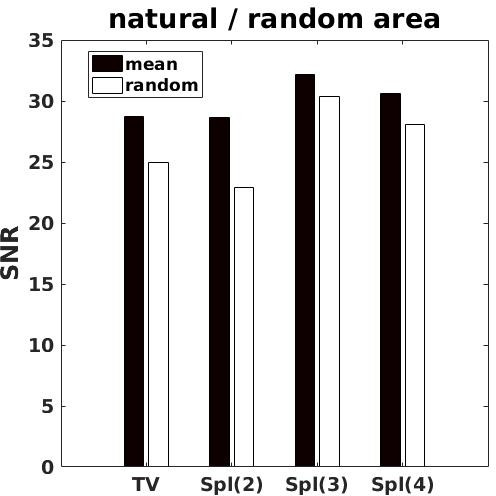}
\includegraphics[width=0.49\textwidth]{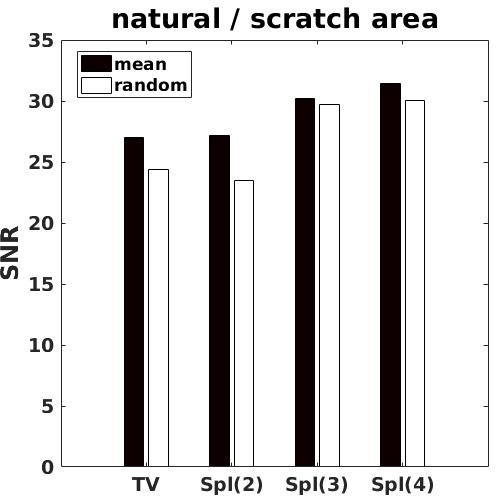}
\caption{Obtained SNR values using random starting guess (white) or mean value (black). Left: random inpainting area (3 \% of the pixels); right: scratches (3 scratches, 4 pixels width); top: cartoon-like images; bottom: natural images.}
\label{fig_starting}
\end{figure}

\begin{figure}
\centering
\includegraphics[width=0.49\textwidth]{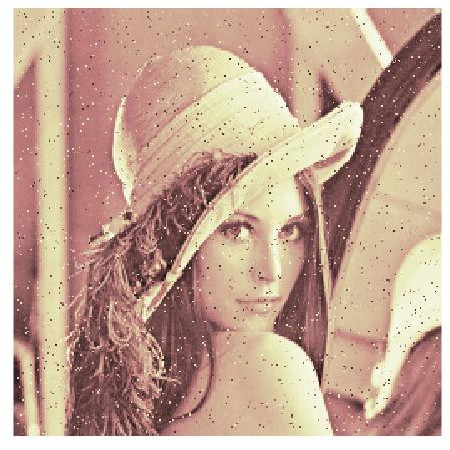}
\includegraphics[width=0.49\textwidth]{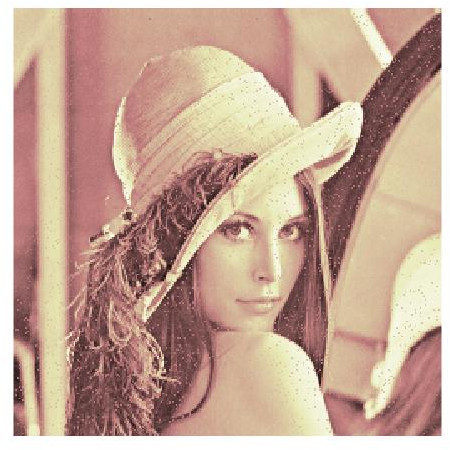}
\caption{Reconstruction of Lena (see Figure \ref{fig_noise}) using splines of order $2$ with random starting guess (left) and mean value strategy (right).}
\label{fig_start_example}
\end{figure}

\subsubsection{Spline order}
Figure \ref{fig_starting} already makes clear that a higher spline order not necessarily yields a better reconstruction. Indeed, the optimal order seems to depend on the image type as well as on the inpainting area. Higher order splines are preferable for natural images, which usually have a more complex structure, as well as for inpainting regions that produce larger gaps, e.g., scratches. For more simple structures and smaller inpainting regions lower order splines return comparable or even better results.
This will be analyzed in more details in the next experiment.

Again, we calculate the mean SNR over $100$ runs using $128\times128$ pixel images. This time, we plot the SNR against the percentage of unknown pixels (random inpainting area) or scratch width (scratch inpainting area) for both image types. Figure \ref{fig_order} shows the obtained SNR for different spline orders and standard TV as comparison. For both algorithms, we use the optimal starting guess just discussed. 

Obviously, natural images are better reconstructed using splines of order $3$ or higher. Especially, when the inpainting area is a connected scratch, higher orders can improve the result. When the image geometry becomes more simple, as e.g., for cartoon-like images, splines of order $2-3$ perform best. For this reason, we recommend to use splines of order $2$ or $3$ for cartoon-like images and order $3$ to $4$ for natural images, depending on the inpainting region. Note that standard TV inpainting performs comparable to spline inpainting with order $2$ for natural images. For cartoon-like images and randomly missing pixels it even performs better than all splines of order $\geq3$, but is outperformed by spline order $2$.

Figure \ref{fig_order_example} gives an example how the reconstruction quality can increase if a higher order spline is used on natural images. The scratched image was reconstructed using splines of order $2$ and $3$ obtaining a SNR of $25.73$ and $30.67$. As a comparison, the SNR in case of standard TV is $25.72$.

\begin{figure}
\centering
\includegraphics[width=0.49\textwidth]{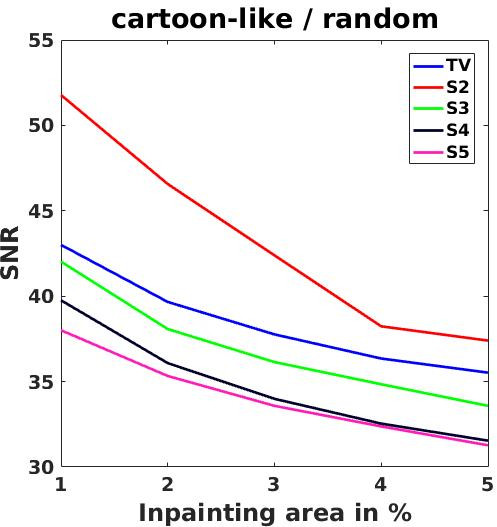}
\includegraphics[width=0.49\textwidth]{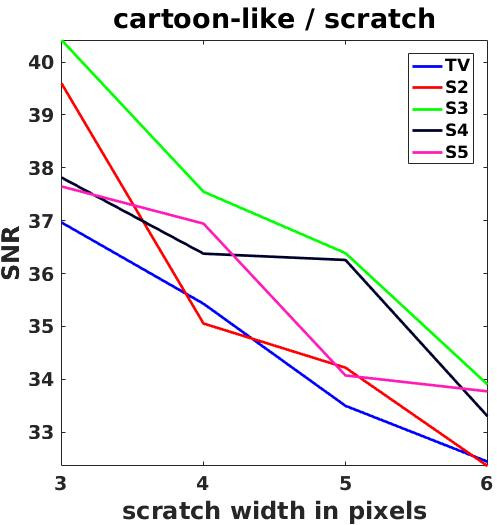}\\
\includegraphics[width=0.49\textwidth]{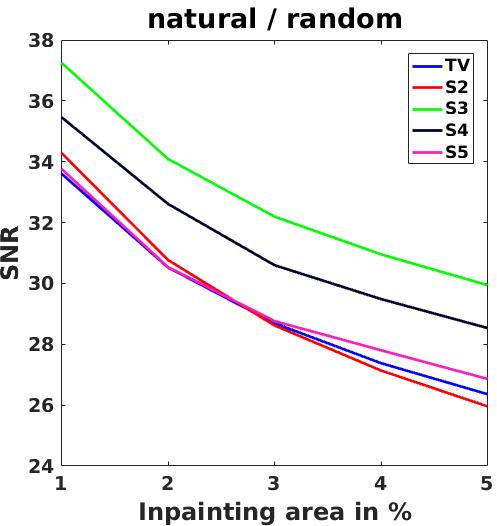}
\includegraphics[width=0.49\textwidth]{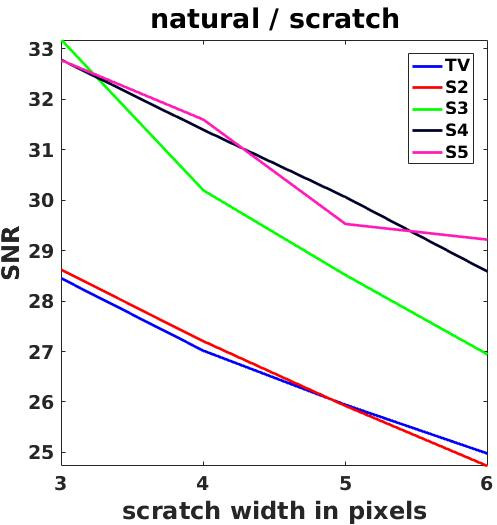}
\caption{Obtained SNR values for different spline orders and standard TV. Left: random inpainting area; right: scratches; top: cartoon-like images; bottom: natural images.}
\label{fig_order}
\end{figure}

\begin{figure}
\centering
\includegraphics[width=0.49\textwidth]{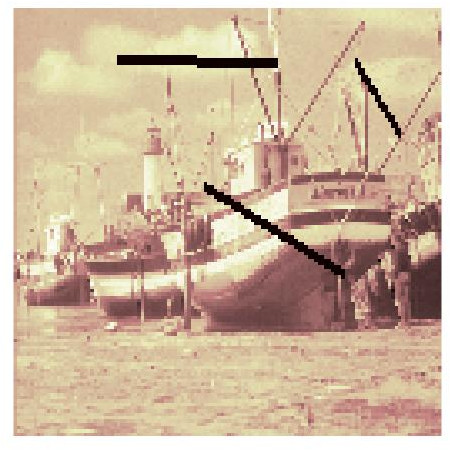}
\includegraphics[width=0.49\textwidth]{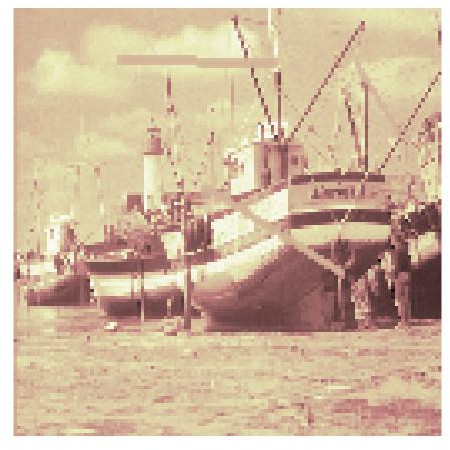}\\
\includegraphics[width=0.49\textwidth]{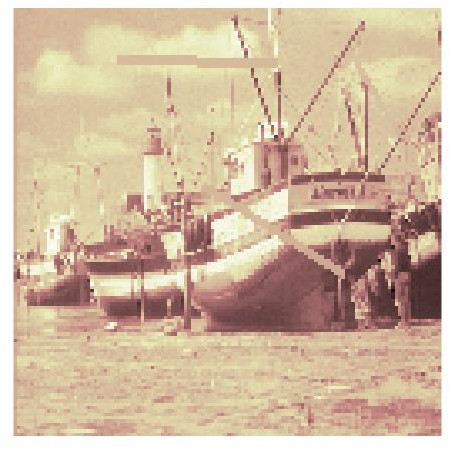}
\includegraphics[width=0.49\textwidth]{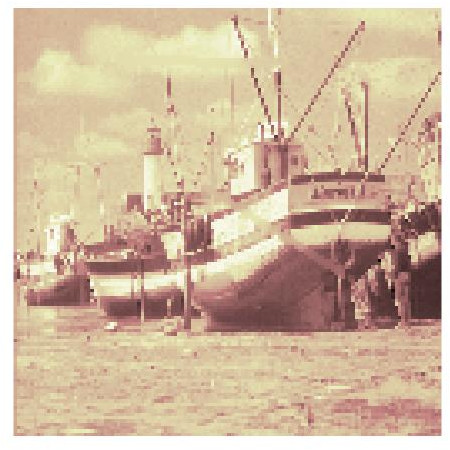}
\caption{Top, left: Original image with scratches; top, right: reconstruction using standard TV inpainting (SNR $25.72$); bottom, left: spline inpainting order 2 (SNR $25.73$); bottom, right: spline inpainting order 3 (SNR $30.67$).}
\label{fig_order_example}
\end{figure}

\subsection{Application 1: text removal}

An application of inpainting methods is the removal of unwanted objects in images, such as text. In Figure \ref{fig_text} we illustrate how an image might be covered by an advert or other text. Here, we use the "Lena" image of size $256\times256$ pixels. We compare the  TV reconstruction with our spline approach of order $3$, $4$ and $5$. Since we use a natural image and the text inpainting area creates large gaps in the data, the reconstruction quality strongly profits from a higher spline order. The reconstruction has an SNR of only $23.2$ for spline order $3$, but increases to $28.45$ for order $4$ and even $31.25$ for order $5$. Standard TV inpainting results in an SNR of $23.58$.

\begin{figure}
\centering
\includegraphics[width=0.49\textwidth]{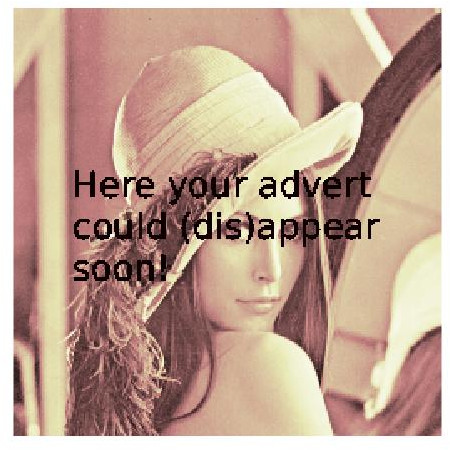}
\includegraphics[width=0.49\textwidth]{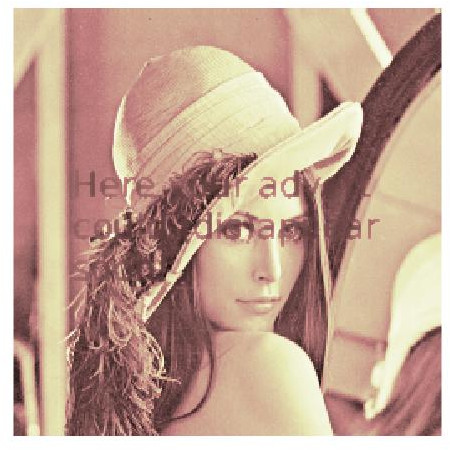}\\
\includegraphics[width=0.49\textwidth]{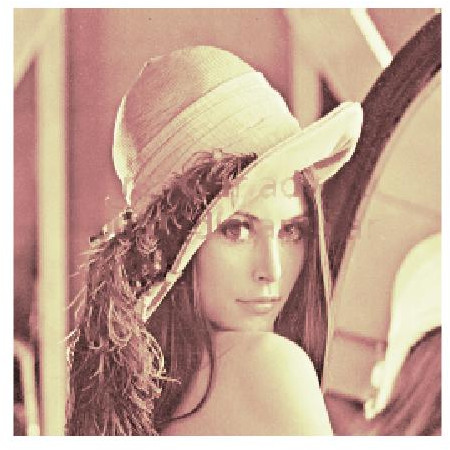}
\includegraphics[width=0.49\textwidth]{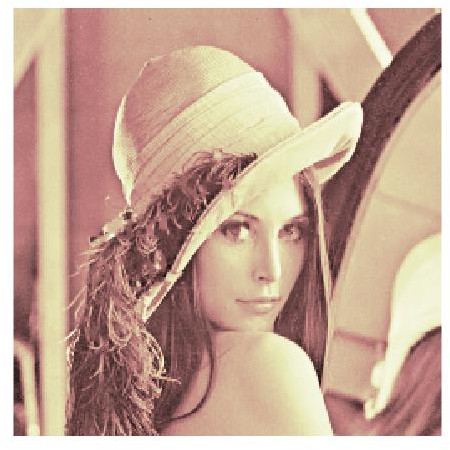}
\caption{Removing text from Lena: text image (top, left), spline inpainting using order $3$ (top, right), order $4$ (bottom, left) and $5$ (bottom, right). Obtained SNR values: $23.2$, $28.45$ and $31.25$.}
\label{fig_text}
\end{figure}

\subsection{Application 2: Salt-and-pepper noise}

As a last example, we consider an image that is corrupted by salt-and-pepper noise, i.e., some pixels are randomly set to the minimal or maximal value. Since this noise deletes all information about the original pixel value, we can model the reconstruction as inpainting problem where the inpainting region is given by all pixels with minimal or maximal value. This coincides with our random inpainting area model. 

So far, we discussed noiseless data outside the inpainting domain. Therefore, the operator $G$ was chosen such that only interpolating solutions were allowed. However, when dealing with noisy images it is not unlikely that the image is corrupted by several types of noise. Hence, we now use the relaxed model \eqref{eq:Regularisation1}. We can still use the ADMM, but now with the operator $G(\mf)=\frac{\varepsilon}{2}\|\mB_{\Xi^{\ast}}\mf-\mathbf{g}_{\Xi^{\ast}}\|_2^2$ and its proximity operator
\begin{align*}
\mathop{prox}_{\lambda G}(\mf)= (\mathop{\mathbf I}+\lambda\varepsilon\mB_{\Xi^{\ast}}^T\mB_{\Xi^{\ast}})^{+}
(\mf+\lambda\varepsilon\mB_{\Xi^{\ast}}^T\mathbf{g}_{\Xi^{\ast}}),
\end{align*}
where $\mathop{\mathbf I}$ is the identity matrix. Note that we cannot use the relaxed model \eqref{eq:Regularisation1} to denoise the image far away from the inpainting domain since the TV minimization is only performed in a surrounding of the unknown pixels. Hence, for a good reconstruction we need to choose a higher spline order such that the spline support enlarges. Moreover, a large constant $\varepsilon$ should be chosen, otherwise the solution of the relaxed model \eqref{eq:Regularisation1} is nearly constant. A simultaneous denoising may be obtained by expanding the TV term in model \eqref{eq:Regularisation1} to the complete image. However, this will drastically increase the number of Gauss-Legendre points used for the quadrature formula and, thus, increase the computational complexity.

Figure \ref{fig_denoise} shows the result of spline inpainting with order $4$ using the relaxed model on an image that was corrupted by Gaussian and salt-and-pepper noise. The parameter $\varepsilon$ is chosen such that the best SNR is obtained. This is illustrated in Figure \ref{fig_denoise_parameter} where the SNR for different choices of $\varepsilon$ is shown. We notice that for small $\varepsilon$ the solution is nearly constant while a too large parameter can reproduce more of the noise in the reconstruction. Figure \ref{fig_denoise} shows the reconstruction for $\varepsilon=50$ which leads to an SNR of $16.7$. As a comparison, the SNR of the noised image is $4.04$ in case of Gaussian and salt-and-pepper noise, and $15.69$ if only Gaussian noise is considered.

\begin{figure}
\centering
\includegraphics[width=0.49\textwidth]{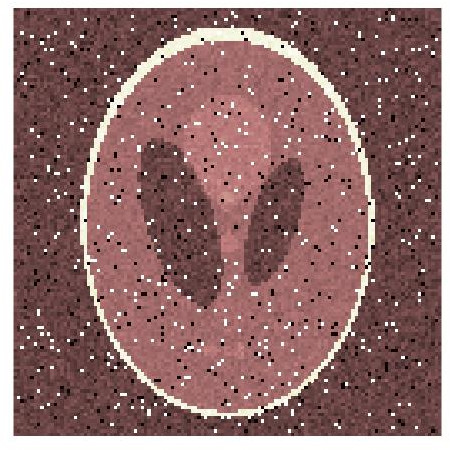}
\includegraphics[width=0.49\textwidth]{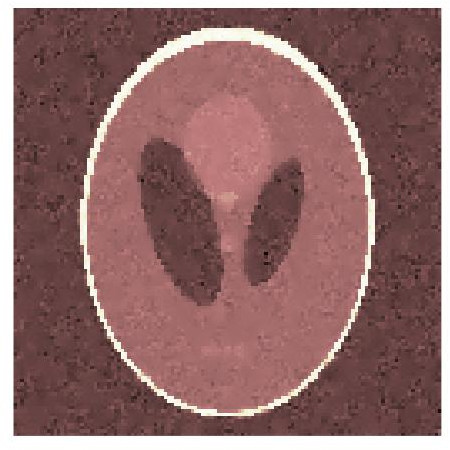}
\caption{Cartoon-like image corrupted by Gaussian and salt-and-pepper noise (left) and its reconstruction using spline inpainting with order $4$ (right); SNR $16.7$.}
\label{fig_denoise}
\end{figure}

\begin{figure}
\centering
\includegraphics[width=0.49\textwidth]{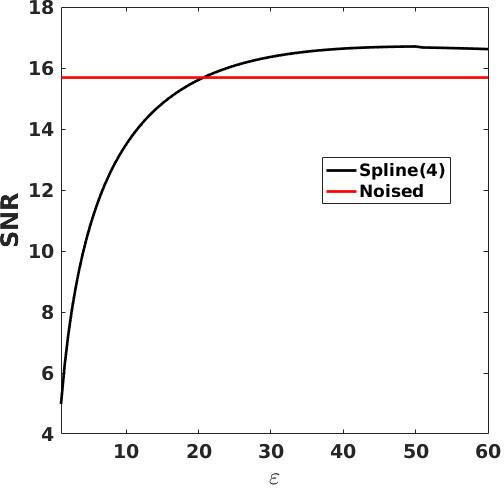}
\caption{SNR value obtained by spline inpainting with order $4$ and different parameters $\varepsilon$. The SNR of the Gaussian noised image is plotted as comparison.}
\label{fig_denoise_parameter}
\end{figure}

\section{Conclusion}

We presented a new approach to model the discrete inpainting problem using TV regularization and splines. The spline order can be chosen adapted to the underlying image and inpainting area. The advantages of this method were demonstrated in numerical experiments. Especially, when the images are complex and/or the inpainting domains are large, the reconstruction quality can highly profit from the new concepts by choosing a higher spline order. But also for cartoon-like images and low spline orders
the method returns reasonable results. As a slight disadvantage, the new methods requires precalculation of the spline basis, Greville abscissae and Gauss-Legendre points what increases the runtime of the algorithm.

Although the method increases the reconstruction quality, it is still based on TV minimization and, thus, cannot overcome the known issues of TV inpainting completely. Therefore, considering other functionals is future work. Moreover, we want to extend the method on more general domains. Depending on the image, it might not be necessary to interpolate every pixel which could
be exploited by the use of non-uniform knots (and interpolation points) and/or the combination with hierarchical splines.

\bibliographystyle{siamplain}
\bibliography{Lit_inp.bib}

\begin{thebibliography}{10}

\bibitem{BSCB00}
{\sc M.~Bertalm\'{i}o, G.~Sapiro, V.~Caselles, and C.~Ballester}, {\em {Image
  Inpainting}}, in Proceedings of SIGGRAPH 2000, New Orleans, USA, July 2000.

\bibitem{BVSO03}
{\sc M.~Bertalmio, L.~Vese, G.~Sapiro, and S.~Osher}, {\em {Simultaneous
  structure and texture image inpainting}}, in Computer Vision and Pattern
  Recognition, 2003. Proceedings. 2003 IEEE Computer Society Conference on,
  vol.~2, June 2003, \url{https://doi.org/10.1109/CVPR.2003.1211536}.

\bibitem{CP11}
{\sc A.~Chambolle and T.~Pock}, {\em {A first-order primal-dual algorithm for
  convex problems with applications to imaging}}, Journal of Mathematical
  Imaging and Vision, 40 (2011), pp.~120--145.

\bibitem{CS05}
{\sc T.~Chan and J.~Shen}, {\em {Image Processing and Analysis}}, {Society for
  Industrial and Applied Mathematics}, 2005,
  \url{https://doi.org/10.1137/1.9780898717877},
  \url{http://epubs.siam.org/doi/abs/10.1137/1.978089871787}.

\bibitem{CS02}
{\sc T.~F. Chan and J.~Shen}, {\em {Mathematical Models for Local Nontexture
  Inpaintings}}, SIAM Journal on Applied Mathematics, 62 (2002),
  pp.~1019--1043, \url{https://doi.org/10.1137/S0036139900368844}.

\bibitem{deB68}
{\sc C.~de~Boor}, {\em On uniform approximation by splines}, Journal of
  Approximation Theory, 1 (1968), pp.~219 -- 235,
  \url{https://doi.org/http://dx.doi.org/10.1016/0021-9045(68)90026-9},
  \url{//www.sciencedirect.com/science/article/pii/0021904568900269}.

\bibitem{dB01}
{\sc C.~de~Boor}, {\em {A Practical Guide to Splines}}, Applied Mathematical
  Sciences, Springer New York, 2001,
  \url{https://books.google.de/books?id=m0QDJvBI\_ecC}.

\bibitem{EL99}
{\sc A.~Efros and T.~Leung}, {\em {Texture Synthesis by Non-parametric
  Sampling}}, in In International Conference on Computer Vision, 1999,
  pp.~1033--1038.

\bibitem{AK06}
{\sc {G. Aubert and P. Kornprobst}}, {\em {Mathematical Problems in Image
  Processing}}, {Springer-Verlag New York}, 2~ed., 2006,
  \url{https://doi.org/10.1007/978-0-387-44588-5}.

\bibitem{HH13}
{\sc K.~H\"ollig and J.~H\"orner}, {\em Approximation and Modeling with
  B-splines}, SIAM, Philadelphia (PA), 2013.

\bibitem{MoessnerReif}
{\sc B.~M{\"o}{\ss}ner and U.~Reif}, {\em Stability of tensor product
  {B}-splines on domains}, Journal of Approximation Theory, 154 (2008),
  pp.~1--19.

\bibitem{PHP}
{\sc G.~Peyre}, {\em Homepage}.
\newblock \url{http://www.gpeyre.com/}.
\newblock Accessed: 2017-03-10.

\bibitem{RO94}
{\sc L.~I. Rudin and S.~Osher}, {\em {Total variation based image restoration
  with free local constraints}}, in Proceedings of the International Conference
  in Image Processing, vol.~1, 1994, pp.~31--35.

\bibitem{ROF92}
{\sc L.~I. Rudin, S.~Osher, and E.~Fatemi}, {\em Nonlinear total variation
  based noise removal algorithms}, Physica D,  (1992), pp.~259--268.

\bibitem{Schu}
{\sc L.~Schumaker}, {\em {Spline Functions: Basic Theory}}, Wiley-Interscience,
  New York, 1980.

\end{thebibliography}
\end{document}